\subjclass{Probability, 60G44}
\definecolor{brickred}{cmyk}{0, 0.89, 0.94, 0.28}
\begin{document}
\newtheorem{thm}{Theorem}
\numberwithin{thm}{section}
\newtheorem{lemma}[thm]{Lemma}
\newtheorem{remark}{Remark}
\newtheorem{corr}[thm]{Corollary}
\newtheorem{proposition}{Proposition}
\newtheorem{Problem}{Problem}
\newtheorem{example}{Example}
\newtheorem{theorem}{Theorem}[section]
\newtheorem{deff}[thm]{Definition}
\newtheorem{note}{Note}
\newtheorem{question}{Question}
\newtheorem{case}[thm]{Case}
\newtheorem{prop}[thm]{Proposition}
\numberwithin{equation}{section}
\numberwithin{remark}{section}
\numberwithin{proposition}{section}
\newcommand{\uG}{\underline{G}}
\newcommand{\bD}{\mathrm{I\! D\!}}
\newcommand{\bR}{\mathrm{I\! R\!}}
\newcommand{\uT}{\underline{T}}
\newcommand{\uB}{\underline{B}}
\newcommand{\uU}{\underline{U}}
\newcommand{\bH}{\mathrm{I\! H\!}}
\newcommand{\uA}{\underline{A}}
\newcommand{\uM}{\underline{M}}
\newcommand{\uN}{\underline{N}}
\newcommand{\uP}{\underline{P}}
\newcommand{\sN}{\cal N}
\newcommand{\bC}{\Bbb C}
\newtheorem{corollary}{Corollary}[section]
\newtheorem{others}{Theorem}
\newcommand{\nucleo}{p_{D}_{t}(x,y)}
\newtheorem{conjecture}{Conjecture}
\newtheorem{definition}{Definition}[section]
\newtheorem{cl}{Claim}
\newtheorem{cor}{Corollary}
\newcommand{\ds}{\displaystyle}
\date{}
\newcommand{\pa}{{\cal P}_{\alpha}}

\title{Dimension for martingales}
\author{Prabhu Janakiraman}\address{Prabhu Janakiraman, 
{\tt pjanakir1978@gmail.com}}

\maketitle
\begin{abstract}
We define that a martingale $\int H\cdot dZ$ has Dimension $k$ if the rank of the matrix process $H$ equals $k$ almost surely, for almost every t. The definition is shown to be well defined, and the value can be used as a geometric invariant to classify and study martingales. We also define  general Brownian motions in higher dimensions. 
\end{abstract}

\section{Introduction}
This note is an addendum to the ideas introduced in section $3$ of \cite{Ja1}. Our purpose is to introduce the notion of dimension for a martingale. Using this concept, it seems that one can deal with martingales in the way topologists deal with manifolds. Some of the concepts like submersions and related theorems, as found in \cite{GP}, seem accessible. We give a starting point for such a theory and prove some preliminary theorems. (Martingale-dimension is not a new topic, however; see section \ref{AF_dim} for a brief discussion on this. The concept introduced in this paper appears to be new.)

In section 3.3 of \cite{Ja1}, the author defined a $\bC^1$-Brownian motion $X= X_1+iX_2$, as opposed to regular complex or $\bR^2$ Brownian motion, as a continuous martingale with $X_0=0$ and satisfying
$\langle X\rangle_t = \langle X_1\rangle_t + \langle X_2\rangle_t \equiv t$.
This we called the L\'evy characterization for $\bC^1$-Brownian motion. One of the interesting consequences of this definition is that if $(Z_1, Z_2)$ is an $\bR^2$-Brownian motion, then $Z_1$ and $\frac{Z_1 + i Z_2}{\sqrt{2}}$ become equivalent entities when considered as $\bC^1$-processes.
 At the end of the same section, the author raised the question of finding a basic property that characterizes (general) Brownian motion independently of the space in which it travels. Such a property is then an \emph{intrinsic invariant} of Brownian motion.

\begin{definition}
A common property or value characterizing a collection of processes is an invariant of that collection. 
\end{definition}
We may call the property \emph{intrinsic} if it is dependant fundamentally on the structures or variables that define the processes; and in general, not defined directly in terms of the range space of the processes. The L\'evy characterization already offers the quadratic variation as a basic invariant. More generally, we can say that any two martingales on a probability space $\Omega$ are equivalent if they have the same quadratic variation (QV). In particular, we may consider identifying all processes with QV $= t$ as Brownian motion, seek to find their shared properties and transforms that preserve them.

\begin{note} 
\begin{enumerate}
\item
All martingales that we consider are measuarable with respect to some Brownian filtration, having a stochastic representation $\int H\cdot dZ$, such that ${\bf{P}}(\int_0^t |H_s|^2 ds < \infty) =1$ for all $t$. 
\item For any matrix $A$, $A^{tr}$ means the transpose of $A$.
\end{enumerate}
\end{note}

\section{Dimension}
On the other hand, it is natural to think of $(Z_1, 0)$ and $\frac{(Z_1, Z_2)}{\sqrt{2}}$ as distinct when considered as $\bR^2$ processes. As is obvious, $(Z_1, 0)$ maps into a $1$-dimensional subspace of $\bR^2$ whereas $\frac{(Z_1, Z_2)}{\sqrt{2}}$ has a genuinely $2$-dimensional range space. But such a property requires that we think of the process in terms of its range space; it is extrinsic.  We want to establish the distinction of such processes in terms of an `intrinsic' invariant for martingales.

\begin{definition}\label{Dim_def}
Let $X=\int H\cdot dZ$ be an $\bR^n (\bC^n)$ valued martingale that is  measurable with respect to $\bR^d$-Brownian motion $Z$. We say that $X$ has real (complex) Dimension $k$, write rDim (cDim), if the real (complex) matrix process $H_t$ has rank $k$ almost surely  for almost every $t>0$. 
\end{definition}

We will establish that Dimension is a well-defined property that does not depend on the particular stochastic representation. Observe that $rDim(Z_1)=1$ and $rDim(Z_1+iZ_2)= rDim(Z_1,Z_2) =2$, whereas $cDim(Z_1) = cDim(Z_1+iZ_2)=1$. 
\begin{remark}
Here on, we will deal only with $\bR^n$-valued martingales, and refer to $rDim$ as simply Dimension. The concepts can also be extended to the $\bC^n$ setting. 
\end{remark}
 Although a matrix indicates the range space's dimension, we think of the rank as an intrinsic property of the linear transformation, indicating the minimal number of linearly independent rows or columns rather than the total size. Likewise, the dimension tells us that the martingale is intrinsically or locally equivalent to a Brownian motion of that unique dimension.
In giving Definition \ref{Dim_def}, the author has the following analogy in mind. When we consider a manifold $M$ of dimension $d$, we think that at any point $p\in M$ there exists a neighborhood that is homeomorphic to a coordinate neighborhood of $\bR^d$. Likewise, a martingale $\int H\cdot dZ$ of dimension $k$ should be ``locally'' equivalent to $\bR^k$-Brownian motion $W$ via a martingale transform:
$$ W \rightarrow K\star W = \int K\cdot dW = H\star Z = \int H\cdot dZ.$$
Heuristically, $H$ takes the $d$-dimensional $dZ$ onto a $k$-dimensional $H\cdot dZ$ in $\bR^n$. And the base process $Z$ splits into an orthogonal sum of $S + S^\perp$ where $S$ and $S^\perp$ are $k$ and $d-k$ dimensional processes respectively. This idea is explored more precisely in the section 4.

\begin{remark}
More generally, given a martingale $X=(X^1, \cdots, X^n)$, we can consider the covariation matrix process $B = [b_{ij}]_{i,j=1}^n$ where 
$$ b_{ij} = \frac{d}{dt}\langle X^i, X^j\rangle_t .$$ We can then say $Dim(X) = k$ if $rank(B)=k$ a.s., for a.e. $t$. This agrees with the earlier definition since if $X=\int H\cdot dZ$, then $B = HH^{tr}$ and $rank(H) = rank(B)$. This definition also indicates the intrinsic nature of Dimension, since it is directly based on the covariance relations of the coordinates. We also see that, besides quadratic variation and dimension, the covariance matrix process also gives equivalence classes of $\bR^n$ valued martingales.
\end{remark}
As an added comment to this remark, it must be admitted that Dimension, unlike the quadratic variation, cannot be completely independent of the range space. A process of dimension $k$ travels in some space of dimension greater than or equal to $k$. We can however overcome the difficulty by identifying all processes as taking values in $\bR^\infty$; doing this should also broaden the scope for this type of analysis on martingales. 

\subsection{The key questions}
While we have proposed Definition \ref{Dim_def} for dimension, it is not clear whether it is well-defined. We have to resolve the following two questions. 

\begin{question}\label{question1}
\begin{enumerate}
\item Is Dimension a well-defined property for a martingale that is independent of the stochastic representation?
\item If $Dim(X) = k$, is there an $\bR^k$-Brownian motion $W$ such that $X=\int K\cdot dW$?
\end{enumerate}
\end{question}
Before we proceed to answer, let us consider an alternate definition for dimension.

\begin{definition}\label{Brownian_D}
An $\bR^n$-valued martingale $X=\int H\cdot dZ$ has Brownian Dimension $k$ if
\begin{enumerate}
\item
$X$ is measurable with respect to a $k$-dimensional Brownian motion $W$ and has the stochastic representation $\int K\cdot dW$,
\item The matrix process $K$ has rank $k$ almost surely for almost every $t>0$.
\end{enumerate}
\end{definition}
Observe that this is apparently a stronger requirement than Def \ref{Dim_def}. The implication of Question $(2)$ is assumed apriori, and the corresponding matrix $K$ has always rank $k$. Therefore, this is a well-defined notion for dimension. But the subtle difference between the two definitions is explained in the next section. They don't appear to be equivalent even though both questions in Question \ref{question1} have affirmative answers.

\begin{example}
Let $X=\int H\cdot dZ$ be $\bR^n$ valued and measurable with respect to $d$ dimensional $Z$. Define the Graph of $X$ as $Graph(X) = (Z, X)$, an $\bR^{n+d}$ martingale again run on $Z$. It is an easy to see that $Graph(X)$ is measurable with respect to $Z$ and has Brownian dimension $d$.
\end{example}

\section{Basic Results}\label{Results}
Let us begin by showing that  dimension is a well-defined property.
\begin{theorem}\label{mainT}
If $X = \int H\cdot dZ = \int N\cdot dM$ satisfies that $rank(H)= k$ a.s., a.e. t, then $rank(N)=k$  a.s., a.e. t.
\end{theorem}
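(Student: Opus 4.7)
The plan is to reduce the statement to a pointwise linear-algebra identity by exploiting the uniqueness of the (matrix-valued) quadratic covariation of $X$, which the remark preceding the theorem has already flagged as the bridge between $H$ and the intrinsic object $B = HH^{tr}$.

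First, I would record the standard linear-algebra lemma that for any real matrix $A$, $\mathrm{rank}(A) = \mathrm{rank}(AA^{tr})$; this follows from $\ker A^{tr} = \ker(AA^{tr})$ (if $AA^{tr}v = 0$ then $\|A^{tr}v\|^2 = v^{tr}AA^{tr}v = 0$), applied to both $H$ and $N$. Thus it suffices to show that $HH^{tr} = NN^{tr}$ as matrix processes, a.s.\ for a.e.\ $t$.

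Second, I would compute the covariation matrix of $X$ from each representation. Writing $X^i = \sum_\ell \int H^{i\ell}\,dZ^\ell = \sum_m \int N^{im}\,dM^m$ and using orthogonality of the components of $Z$ (respectively $M$), the Kunita--Watanabe/It\^o isometry gives
\begin{equation*}
\langle X^i, X^j\rangle_t \;=\; \int_0^t (HH^{tr})_{ij}(s)\,ds \;=\; \int_0^t (NN^{tr})_{ij}(s)\,ds.
\end{equation*}
Since the covariation process $\langle X^i,X^j\rangle$ is an intrinsic object of $X$ (independent of its representation), differentiating in $t$ yields $(HH^{tr})_{ij} = (NN^{tr})_{ij}$ $dP\otimes dt$-a.e., for every pair $i,j$. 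Hence $HH^{tr} = NN^{tr}$ a.s.\ for a.e.\ $t$, and the rank lemma above gives $\mathrm{rank}(N) = \mathrm{rank}(HH^{tr}) = \mathrm{rank}(H) = k$ at the same $(\omega,t)$.

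There is no serious obstacle here; the only care needed is to make precise the passage from equality of the (time-integrated) covariations to equality of their integrands on a common full-measure set in $(\omega,t)$, which is standard once one fixes a jointly measurable version of $HH^{tr}$ and $NN^{tr}$. I would note at the end that this argument in fact shows the stronger pointwise conclusion that the two covariance densities coincide, which also justifies the alternative (intrinsic) formulation of Dimension given in the remark via the covariation matrix $B$.
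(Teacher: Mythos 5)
Your proposal is correct and follows essentially the same route as the paper: both equate the covariation matrices $\langle X^i,X^j\rangle$ computed from the two representations to conclude $HH^{tr}=NN^{tr}$ a.s.\ for a.e.\ $t$, and then deduce equality of ranks from equality of the Gram matrices of the rows. Your version is somewhat more explicit (you isolate and prove the lemma $\mathrm{rank}(A)=\mathrm{rank}(AA^{tr})$, where the paper only gestures at ``a rank preserving transformation between the subspaces''), but the underlying argument is the same.
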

\begin{proof}
Consider first the special case when $n=1$. Then $H$ and $N$ are $1\times d$ and $1\times m$ matrix processes. Since $X=\int H\cdot dZ = \int N\cdot dM$ for all t a.s., we know that $\langle X\rangle = \int |H|^2 ds = \int |K|^2 ds$ for all t, a.s. This means that for almost every $\omega$, we have $|H_s(\omega)|^2 = |K_s(\omega)|^2$ for almost every $s$. This means $rank(N) = rank(H) =1$ a.s. for almost every $t$.

Now consider the general case for any $n$. $H$ and $N$ are $n\times d$ and $n\times m$ matrix processes respectively. Following the same argument, we conclude $\langle X_i, X_j\rangle = \int H^i\cdot H^j ds = \int N^i\cdot N^j ds$ for all t, a.s. This means that for almost every $\omega$, we have $H^i_s(\omega)\cdot H^j_s(\omega) = N^i_s(\omega)\cdot N^j_s(\omega)$ for almost every $t$. It is clear that there is a rank preserving transformation between the subspaces spanned by these vectors. We conclude $rank(N) = rank(H) =k$ a.s. for almost every $t$.

\end{proof}

Next let us proceed to Question (2) and show that a $k$-dimensional martingale is equivalent to an $\bR^k$-Brownian motion.
\begin{theorem}\label{mainT2}
Let $X$ be an $\bR^n$-martingale measurable with respect to $d$-dimensional Brownian motion $Z$. Let $k\leq d\wedge n$. If $X=\int H\cdot dZ$ satisfies that $rank(H) = k$ a.s. for almost every $t$, then there exists a $\bR^k$-Brownian motion $W$ and an $n\times k$ matrix process $K$ of rank $k$ (and measurable with respect to $Z$) such that $X= \int K\cdot dW$.
\end{theorem}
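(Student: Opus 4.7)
The plan is to produce $W$ and $K$ by a pointwise (in $(\omega,t)$) linear-algebraic factorization of $H$, then verify the conclusion via L\'evy's characterization and associativity of the stochastic integral. Specifically, for each $(\omega,t)$ where $H_t(\omega)$ has rank $k$, I would factor
\[
H_t(\omega) = K_t(\omega)\, Q_t(\omega),
\]
where $Q_t(\omega)$ is a $k\times d$ matrix whose $k$ rows form an orthonormal basis of the row space of $H_t(\omega)$ (so $QQ^{tr}=I_k$), and $K_t(\omega)$ is the unique $n\times k$ matrix, necessarily of rank $k$, expressing the rows of $H$ in that basis. Existence of such a factorization at each point is immediate from elementary linear algebra (or, equivalently, from the reduced SVD $H=U_k\Sigma_k V_k^{tr}$, taking $K=U_k\Sigma_k$ and $Q=V_k^{tr}$).

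Once $Q$ is constructed, I would set
\[
W_t \;:=\; \int_0^t Q_s\cdot dZ_s,
\]
which is an $\mathbb{R}^k$-valued continuous martingale measurable with respect to the filtration of $Z$. Its covariation matrix is
\[
\langle W^i,W^j\rangle_t \;=\; \int_0^t (QQ^{tr})_{ij}\,ds \;=\; \int_0^t \delta_{ij}\,ds \;=\; t\,\delta_{ij},
\]
so L\'evy's characterization (the same criterion invoked in Section~1) identifies $W$ as a standard $\mathbb{R}^k$-Brownian motion. Then associativity of the stochastic integral gives
\[
\int_0^t K_s\cdot dW_s \;=\; \int_0^t K_s Q_s\cdot dZ_s \;=\; \int_0^t H_s\cdot dZ_s \;=\; X_t,
\]
as required. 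Note that the Frobenius norm is preserved by this factorization, $|K_s|^2=\mathrm{trace}(KK^{tr})=\mathrm{trace}(HH^{tr})=|H_s|^2$, so the standing integrability assumption $\int_0^t|H_s|^2\,ds<\infty$ transfers to $K$, and similarly $|Q|^2=k$ is bounded so $W$ is well defined.

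The only genuinely nontrivial point is that the factorization $H=KQ$ must be carried out in a \emph{predictable} way, so that $Q$ (and hence $K$) is a predictable matrix process with respect to the filtration of $Z$. This is the main obstacle, since the SVD is not a continuous function of the matrix when singular values collide. I would handle it by a measurable selection: the covariance matrix $B_t=H_tH_t^{tr}$ is a predictable, symmetric, positive semidefinite $n\times n$ process of constant rank $k$, so its spectral projector onto the nonzero eigenspaces is a predictable function of $B$ (obtained, e.g., from a contour-integral or polynomial functional-calculus expression in $B$). From this projector one can measurably produce an orthonormal basis of the column space of $H$ (the rows of $Q$ are then the pullbacks to $\mathbb{R}^d$), for instance by applying measurable Gram--Schmidt to the rows of $H$ after a predictable choice of $k$ linearly independent rows. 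Once predictability of $Q$ is secured, the remaining verifications are routine.
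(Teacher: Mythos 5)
Your proposal is correct and follows essentially the same route as the paper: factor $H=KQ$ with $Q$ a predictable $k\times d$ process having orthonormal rows spanning the row space of $H$, set $W=\int Q\cdot dZ$, identify $W$ as $\bR^k$-Brownian motion via L\'evy's characterization, and recover $X=\int K\cdot dW$ by associativity. Your explicit attention to the predictability of the selection (measurable Gram--Schmidt after a predictable choice of $k$ independent rows) is a point the paper's proof asserts without elaboration, so your write-up is if anything slightly more careful on that step.
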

\begin{proof}
Suppose $H$ is $n\times d$ matrix process of rank $k$. Take the first $k$ row vectors $\vec{v}_1, \cdots, \vec{v}_k$ that are linearly independent and let $\vec{u}_1, \cdots, \vec{u}_k$ be the orthonormal basis obtained via the Gram-Schmidt process. Let $V$ be the $k\times d$ matrix with rows $\vec{u}_j$. We can rewrite $H = K\cdot V$ for $K$, a predictible $n\times k$ matrix process.

Define $dW = V\cdot dZ$. $W$ is a $k$-dimensional continuous process starting at $0$. Moreover, because of the orthonormality of the vectors $\vec{u}_j$, we know that $\langle W_i, W_j\rangle_t = \delta_{ij} t$. By the L\'evy characterization, we conclude $W$ is an $\bR^k$-Brownian motion. We have shown $X = K\star W$ as required. That $rank(K)=k$ a.s., a.e. $t$, follows from Theorem \ref{mainT}.
\end{proof}

The theorem is intuitively what we want except for the discordant possibility that $K$ is measurable with respect to $Z$ and not necessarily with respect to $W$. One cannot casually bypass this possibility, since for example, we see that a martingale such as $\int F(Z_1, Z_2) dZ_1$ has an integrand that clearly is measurable with respect to the joint process $(Z_1, Z_2)$. Can it (or when can it) be rewritten as a stochastic integral $\int k dW$, where $W$ is $\bR^1$ Brownian motion and $k$ is predictible with respect to the filtration of $W$? The problem is even more perplexing in higher dimensions.

I. Karatzas and S. Shreve record the following theorem and remark; see Theorem 4.2, remark 4.3 in \cite{KS}, which essentially prove our results in a much more general sense, and also give a partial answer to our perplexing problem. We write it in a slightly modified manner suitable to our assumptions.
\begin{theorem}\label{KS_theorem}
Suppose $X=\{\int H\cdot dZ, \mathcal{F}\}$ is defined on $(\Omega, \mathcal{F}, P)$, where $H$ is an $n\times d$ predictible matrix process having rank $k$ a.s., a.e. t. If $H^i$ is the $i^{th}$ row, suppose $\int_0^t H^i\cdot H^j ds$ is almost surely an absolutely continuous function of $t$. Then there exists a $\bR^k$ Brownian motion $W$ on $(\Omega, \mathcal{F}, P)$ and an $n\times k$ predictible matrix process $K$ (of rank $k$) such that we have the stochastic representation
$$ X = \int K\cdot dW.$$
Moreover, $K = H\cdot V^{tr}$ and $dW = V\cdot dZ$ for a $k\times d$ predictible matrix process $V$ with orthonormal rows.
\end{theorem}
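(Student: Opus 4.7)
The plan is to upgrade the construction in the proof of Theorem \ref{mainT2} to produce a predictable $V$ with orthonormal rows, verify the decomposition $H = KV$, and then apply the L\'evy characterization to $W = \int V \cdot dZ$. The bulk of the work is the measurable/predictable construction of $V$; once $V$ is in hand, the remaining steps are algebraic identities together with a single invocation of L\'evy's theorem.

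First I would address the measurable selection issue. Since $H$ has rank $k$ almost surely for a.e.\ $t$, at each $(\omega, t)$ there is some subset $S \subset \{1,\dots,n\}$ of size $k$ such that the rows $H^{i_1}, \dots, H^{i_k}$ with $i_j \in S$ are linearly independent. There are only $\binom{n}{k}$ such subsets, and for each one the set of $(\omega, t)$ on which that particular $S$ is the lexicographically first admissible choice is a predictable set (it is defined by non-vanishing of certain $k\times k$ Gram determinants $\det(H^{i_j}\cdot H^{i_\ell})_{j,\ell}$, which are predictable functions of $H$). On each such predictable piece I run the Gram--Schmidt procedure on the selected rows; because Gram--Schmidt is a rational function of the entries away from the degenerate locus, this yields an orthonormal family $\vec{u}_1,\dots,\vec{u}_k$ depending predictably on $(\omega,t)$. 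Piecing these together over the finitely many subsets $S$ produces a $k\times d$ predictable matrix process $V$ with $V V^{tr} = I_k$ and with row space equal to the row space of $H$.

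Next I would establish the decomposition $H = KV$ with $K := H V^{tr}$. Since the rows of $V$ form an orthonormal basis of the row space of $H$, the matrix $V^{tr}V$ is the orthogonal projection onto that row space; applied on the right to $H$ it acts as the identity on the rows, so $H V^{tr} V = H$, i.e.\ $H = KV$. I then set $W_t := \int_0^t V \cdot dZ$. Then $W$ is a continuous local martingale starting at $0$, measurable with respect to the filtration of $Z$, with cross-variations
\begin{equation*}
\langle W_i, W_j\rangle_t \;=\; \int_0^t (V V^{tr})_{ij}\,ds \;=\; \delta_{ij}\,t,
\end{equation*}
so by L\'evy's characterization $W$ is an $\mathbb{R}^k$-Brownian motion. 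The associativity of stochastic integration gives
\begin{equation*}
\int_0^t K \cdot dW \;=\; \int_0^t K V \cdot dZ \;=\; \int_0^t H\cdot dZ \;=\; X_t,
\end{equation*}
and $K$ has rank $k$ by Theorem \ref{mainT} applied to the two representations of $X$.

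The main obstacle is the first step, the predictable Gram--Schmidt: one must avoid any appeal to a global basis because which rows of $H$ are independent can change with $(\omega, t)$. The role of the absolute continuity hypothesis on $\int_0^t H^i \cdot H^j\,ds$ is exactly to guarantee that the covariation structure is described by the pointwise rows of $H$ (rather than by some singular measure), so that the pointwise Gram--Schmidt output yields a process $V$ whose action at time $t$ is literally the orthonormalisation of the row space of $H_t$. Once this is handled, the rest is formal: the L\'evy step and the associativity of stochastic integration do all of the remaining work, and the uniqueness clause $rank(K)=k$ is inherited from Theorem \ref{mainT}.
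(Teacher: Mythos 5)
Your proof is essentially correct, but it takes a different route from the paper, which offers no self-contained argument at all: the paper simply cites Theorem 4.2 and Remark 4.3 of \cite{KS} and observes that the Karatzas--Shreve proof, which works with the covariance density matrix $HH^{tr}$ of assumed constant rank $k$, yields the factorization $K = H\cdot V^{tr}$, $dW = V\cdot dZ$ upon careful inspection. Your construction is more elementary precisely because you exploit the given representation $X=\int H\cdot dZ$: the predictable partition by lexicographically first nonvanishing Gram determinant, followed by Gram--Schmidt on each predictable piece, is a clean way to make the basis choice of Theorem \ref{mainT2} genuinely predictable, and the rest (the projection identity $HV^{tr}V=H$, L\'evy's characterization, associativity of stochastic integration) is routine; note that $rank(K)=k$ also follows directly from $KK^{tr}=HV^{tr}VH^{tr}=HH^{tr}$ without invoking Theorem \ref{mainT}. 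Two caveats are worth recording. First, in your setting the absolute continuity hypothesis on $\int_0^t H^i\cdot H^j\,ds$ is automatic (it is an integral in $t$), so your closing remark about its role is misplaced; that hypothesis earns its keep only in the Karatzas--Shreve formulation, where $X$ is handed to you as a continuous local martingale without a stochastic representation and one must extract $V$ from a measurable factorization of the density matrix alone. Second, and relatedly, your $V$ is built from $H$ and is therefore only predictable with respect to the filtration of $Z$; the paper's subsequent discussion stresses that the $W$ coming out of the \cite{KS} proof is adapted to the possibly smaller filtration generated by $X$ itself. Your argument proves the theorem as literally stated, but it does not recover that finer adaptedness property, which is the feature the author goes on to exploit in the subsection ``How is $X\sim W$?''.
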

The proof of the theorem and the remark 4.3 investigate the matrix process $HH^{tr}$, which is assumed as having constant rank $k$. Since $rank(H) = rank(H H^{tr})$, our assumption is equivalent to the requirement of this proof. That $K=H\cdot V^{tr}$ and $dW= V\cdot dZ$ follow from a careful analysis of the proof.

\subsection{How is $X\sim W$?}
Notice that the theorem states that the $W$ is measurable with respect to the filtration generated by $X$, which can be smaller than that of $Z$. This is already a deeper insight than what Theorem \ref{mainT2} asserts. However, is it true the other way around ... is $X$ measurable with respect to the filtration generated by $W$? As far as the author can tell, the proof in \cite{KS} does not imply this, and it is likely not the case. If we follow their proof for the martingale $X = \int Z_2 dZ_1$, we get $K = [|Z_2|]$ and $dW = \textrm{sgn} (Z_2) dZ_1$; and $|Z_2|$ is not measurable with respect to $\int \textrm{sgn}(Z_2)dZ_1$. We leave the issue as a question with a likely negative answer.

Thus, a martingale $(X, \mathcal{F})$ of dimension $k$ can be written as $\int K\cdot dW$, where $W$ is $\bR^k$ Brownian motion,  measurable with respect to the same filtration $\mathcal{F}$, and $rank(K)=k$ a.s., for a.e. $t$. It is in this sense that we can say $X$ and $W$ are equivalent, or that $X$ runs on $W$. The author's intuition had suggested that a $k$ dimensional $X$ should be measurable with respect to the filtration of a $k$ dimensional Brownian motion. This no longer seems correct; the issue appears similar to how a function on $\bR^2$ need not be measurable with respect to the Borel sigma algebra.

Alternately, we could take Brownian Dimension of Definition \ref{Brownian_D} as our essential value. This would bypass our concern at the start. The question then becomes whether we can build a standard geometry-based theory based on Brownian dimension? We would want this theory, if it develops, to investigate martingale transforms by matrix processes that will send martingale $X$ of Dimension $k$ to martingale $Y$ of Dimension $r$. If we replace with $BDim$ (Brownian Dimension), then the class of valid martingales and transformations should become considerably smaller.

\begin{question}\label{indep_orth}
\begin{enumerate}
\item Is there a minimal set of conditions that if satisfied will imply that two orthogonal martingales are independent?  (Recall $X$ and $Y$ are orthogonal if $XY$ is a martingale.)
\item $K_s$ and ``$dW_s$" are orthogonal random variables, since $K^{ij}_sdW^r_s$ always satisfies the martingale condition. Is it always possible to choose the minimal representation so that they are independent?
\end{enumerate}
\end{question}


\section{Definition for $\bR^{n\times m}_K$ Brownian motion}
The matrix process $V$ of Theorem \ref{mainT2} is $k\times d$ with orthonormal rows $\vec{u}_j$, $j=1, \cdots, k$. It can be extended to an orthonormal basis-process of $\bR^d$, by tagging on $\vec{u}_{k+1}, \cdots, \vec{u}_d$. Let $U_i = \int \vec{u}_i\cdot dZ$, $i=1,\cdots, d$ be $d$ processes in $\bR^d$. It is evident by construction that $U=(U_1, \cdots, U_d)$ is an $\bR^d$-Brownian motion. Now consider the $\bR^d$-valued processes
\begin{eqnarray}
\vec{U}^i &=& (\vec{U}^i_1, \cdots, \vec{U}^i_d)^{tr} \nonumber \\
&=& \int \vec{u}_i \vec{u}_i\cdot dZ = (\int u_{i,1} \vec{u}_i\cdot dZ, \cdots, \int u_{i, d}\vec{u}_i\cdot dZ)^{tr}. \label{U}
\end{eqnarray}

The mutual covariation of the coordinate processes $\vec{U}^i_k$ and $\vec{U}^j_r$ is 

\begin{eqnarray*}
\langle \vec{U}^i_k, \vec{U}^j_r\rangle &=& \int u_{i,k}u_{j,r}d\langle \vec{u}_i\star Z, \vec{u}_j\star Z\rangle \\
&=& \int u_{i,k}u_{j,r}\delta_{ij} dt.
\end{eqnarray*}
Therefore, we conclude
\begin{enumerate}
\item $\langle \vec{U}^i_k, \vec{U}^j_r\rangle = 0$, for all $k, r$, whenever $i \neq j$,
\item $\langle U^i\rangle = \sum_{k=1}^d \langle U^i_k\rangle = t$, for all $i$.
\end{enumerate}
We recall the definition of orthogonality between two $\bR^n$ martingales; see \cite{BW}. 

\begin{definition}\label{orth_def}
$X= (X_1, \cdots, X_n)$ and $Y=(Y_1, \cdots, Y_n)$ are mutually orthogonal if for all $i, j$, we have $\langle X_i, Y_j\rangle = 0$, a.s. 
\end{definition}
It follows that the vector processes $\vec{U}^i$ and $\vec{U}^j$ are mutually orthogonal for all $i\neq j$, and each has total quadratic variation equal to $t$. We define a general multi-dimensional Brownian motion that is characterized by these properties.

\begin{definition}\label{BM}
Let $\vec{W}^1, \cdots, \vec{W}^m$ be a collection of $\bR^n$-valued continuous processes starting at $0$, that satisfy:
\begin{enumerate}
\item $|\vec{W}^j|^2 - t$ is a martingale for all $i$,
\item $\vec{W}^i$ and $\vec{W}^j$ are mutually orthogonal for all $i\neq j$,
\item $Dim(\vec{W}^j)=k_j$ for all $j$, 
\end{enumerate}
Let $K= (k_1, \cdots, k_m)$. Then we say that $W= (\vec{W}^1, \cdots, \vec{W}^m)$ is an $\bR^{n\times m}_{K}$ Brownian motion. 
If $m=1$ and $K=k$, then we may also refer to $W$ as an $\bR^{n}_{k}$  Brownian motion.
\end{definition}

The standard $\bR^d$ Brownian motion $Z=(Z_1, \cdots, Z_d)$ is an $\bR^{1\times d}_{{\bf{1}}_d}$ Brownian motion; ${{\bf{1}}_d}$ is defined below. A simple corollary to the definition is the following.
\begin{corollary}
A $k$ dimensional martingale $X$ in $\bR^n$ is a time change of an $\bR^{n}_{k}$ Brownian motion.
\end{corollary}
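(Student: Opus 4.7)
The plan is a direct time-change argument in the spirit of Dambis--Dubins--Schwarz, but applied to the total quadratic variation rather than to a single scalar martingale, combined with the representation given by Theorem \ref{mainT2}.

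First, I would apply Theorem \ref{mainT2} to write $X = \int K\cdot dW$, where $W$ is an $\bR^k$-Brownian motion and $K$ is an $n\times k$ predictable matrix process of rank $k$ a.s., a.e. $t$. Set
\[
A_t = \langle X\rangle_t = \sum_{i=1}^n \langle X_i\rangle_t = \int_0^t \operatorname{tr}(K_s K_s^{tr})\,ds.
\]
Since $K_s$ has rank $k\geq 1$ for a.e.\ $s$ (a.s.), the integrand $\operatorname{tr}(KK^{tr})=\sum_{i,j}K_{ij}^2$ is strictly positive a.e., so $t\mapsto A_t$ is continuous and strictly increasing a.s. Define the right-continuous inverse $\tau(s)=\inf\{t\geq 0 : A_t > s\}$, the time-changed filtration $\mathcal{G}_s = \mathcal{F}_{\tau(s)}$, and the candidate process $Y_s = X_{\tau(s)}$. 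Then $Y$ is continuous with $Y_0=0$.

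Next I would verify the two nontrivial clauses of Definition \ref{BM} (clause (2) is vacuous when $m=1$). For clause (1): since each $X_i^2 - \langle X_i\rangle$ is a martingale, summing gives that $|X|^2 - A$ is an $\mathcal{F}$-martingale; applying optional stopping at the $\mathcal{F}$-stopping times $\tau(s)$ (using a standard localization to handle integrability) yields that $|Y_s|^2 - s$ is a $\mathcal{G}$-martingale, so $|Y|^2-s$ is a martingale. For clause (3), I would compute the covariation of $Y$ directly: for each $i,j$,
\[
\langle Y_i, Y_j\rangle_s \;=\; \langle X_i, X_j\rangle_{\tau(s)} \;=\; \int_0^{\tau(s)} (K K^{tr})_{ij}(u)\,du,
\]
and the substitution $u=\tau(v)$ with $\tau'(v)=1/\operatorname{tr}(KK^{tr})(\tau(v))$ gives
\[
\langle Y_i, Y_j\rangle_s \;=\; \int_0^s \frac{(K K^{tr})_{ij}(\tau(v))}{\operatorname{tr}(K K^{tr})(\tau(v))}\,dv.
\]
The associated covariation matrix process of $Y$ is thus a positive scalar multiple of $(K K^{tr})\circ\tau$, hence has the same rank as $K$, namely $k$. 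By the remark following Definition \ref{Dim_def} relating $\operatorname{rank}(B)$ to dimension, this gives $Dim(Y)=k$, establishing clause (3). Consequently $Y$ is an $\bR^n_k$ Brownian motion, and $X_t = Y_{A_t}$ exhibits $X$ as the claimed time change.

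The main technical obstacle I anticipate is not a conceptual one but a regularity check: confirming that $\tau(s)$ is a.s.\ finite for all $s\geq 0$ (i.e.\ $A_\infty=\infty$) so that $Y$ is defined on $[0,\infty)$. In the generic case where $A_\infty < \infty$ on a set of positive probability, the construction only produces $Y$ on the random interval $[0,A_\infty)$, and the standard remedy is either to enlarge the probability space and splice on an independent $\bR^n_k$ Brownian motion after time $A_\infty$, or to interpret the statement on the natural time interval of $A$. I would handle it by the former so that the conclusion is stated cleanly in the sense of Definition \ref{BM}.
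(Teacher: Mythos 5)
Your proof is correct and is essentially the argument the paper has in mind (the paper states this as ``a simple corollary'' with no written proof): time-change by the inverse of the total quadratic variation $A_t=\langle X\rangle_t$, which is strictly increasing because $\operatorname{rank}\geq 1$ forces $\operatorname{tr}(KK^{tr})>0$ a.e., then check that the normalized covariation density $(KK^{tr})/\operatorname{tr}(KK^{tr})$ keeps rank $k$. Your handling of the $A_\infty<\infty$ caveat by splicing on an independent $\bR^n_k$ Brownian motion is a legitimate refinement the paper does not address; the preliminary appeal to Theorem \ref{mainT2} is harmless but unnecessary, since the same computation works directly with $H$ and $Z$.
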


 Next, let us denote

\begin{equation}
{\bf{1}}_k = (1, \cdots, 1) (\textrm{ k times}).
\end{equation}
Each $\vec{U}^j$ of (\ref{U}) is an $\bR^{d}_{1}$ Brownian motion, and the processes $\vec{U} = (\vec{U}^1, \cdots, \vec{U}^d)$, $(\vec{U}^1, \cdots, \vec{U}^k)$ and $(\vec{U}^{k+1}, \cdots, \vec{U}^d)$ are $\bR^{d\times d}_{{\bf{1}}_d}$, $\bR^{d\times k}_{{\bf{1}}_k}$ and $\bR^{d\times {d-k}}_{{\bf{1}}_{d-k}}$ Brownian motions respectively. The processes 
$$ S^1 = \frac{\vec{U}^1+\cdots + \vec{U}^k}{\sqrt{k}} \textrm{   and     } S^2 = \frac{\vec{U}^{k+1}+\cdots + \vec{U}^{d}}{\sqrt{d-k}}$$ are $\bR^{d}_k$ and $\bR^{d}_{d-k}$-Brownian motions, that are mutually orthogonal to one another. Therefore, $(S_1, S_2)$ is an $\bR^{d\times 2}_{(k, d-k)}$ Brownian motion. And their normalized sum $\frac{S^1+S^2}{\sqrt{2}}$ is an $\bR^d_d$ Brownian motion. Plus, observe $\sqrt{k}S^1+\sqrt{d-k}S^2 = Z^{tr}$. (These assertions are easy consequences of Proposition \ref{special_property}).

\begin{proposition}
The choice of the basis $\vec{u}_1, \cdots, \vec{u}_k$ would not affect the sum process $\sqrt{k} S^1 = \vec{U}_1 + \cdots + \vec{U}_2$. 
\end{proposition}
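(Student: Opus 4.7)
The plan is to recognize that $\sum_{i=1}^{k} \vec{U}^i$ is a stochastic integral against $dZ$ whose integrand matrix is the \emph{orthogonal projection} onto $\mathrm{span}\{\vec{u}_1,\dots,\vec{u}_k\}$, and that this projection is manifestly basis-independent.

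First, I would unfold the definition from (\ref{U}): the $j$-th coordinate of $\vec{U}^i$ is $\int u_{i,j}\,(\vec{u}_i\cdot dZ)$. Summing over $i=1,\dots,k$ and interchanging the finite sum with the integral gives
$$
\Bigl(\sum_{i=1}^{k} \vec{U}^i\Bigr)_j \;=\; \int \Bigl(\sum_{i=1}^{k} u_{i,j}\,\vec{u}_i\Bigr)\cdot dZ \;=\; \int (P e_j)\cdot dZ,
$$
where $e_j$ is the $j$-th standard basis vector and $P := \sum_{i=1}^{k} \vec{u}_i \vec{u}_i^{tr}$. In vector notation this reads $\sum_{i=1}^{k} \vec{U}^i = \int P\, dZ$.

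Second, I would invoke the elementary linear-algebra fact that $P$ is the matrix of orthogonal projection onto the subspace $V_t(\omega) := \mathrm{span}\{\vec{u}_1(t,\omega),\dots,\vec{u}_k(t,\omega)\}$: it acts as the identity on $V_t(\omega)$ and annihilates $V_t(\omega)^\perp$. Consequently, if $\{\vec{v}_1,\dots,\vec{v}_k\}$ is any other orthonormal basis of the same subspace, then $\sum_{i=1}^{k} \vec{v}_i\vec{v}_i^{tr} = P$. In the setting of Theorem \ref{mainT2}, the subspace $V_t(\omega)$ is intrinsic — it is the row space of $H_t(\omega)$ — so any Gram–Schmidt procedure applied to any choice of $k$ linearly independent rows produces an orthonormal basis of the same $V_t(\omega)$, hence the same $P$, hence the same stochastic integral $\int P\,dZ$.

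The only point worth being a little careful about is that $\vec{u}_i$ is a predictable matrix process rather than a fixed vector, so basis-independence has to hold jointly in $(t,\omega)$; but since at every $(t,\omega)$ the subspace is determined by $H_t(\omega)$, the pointwise linear-algebra argument applies predictably, and no serious obstacle arises. This essentially completes the proof.
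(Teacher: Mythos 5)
Your proof is correct and follows essentially the same route as the paper: both identify the integrand of $\sum_{i=1}^{k}\vec{U}^i$ as the matrix $\sum_{i=1}^{k}\vec{u}_i\vec{u}_i^{tr}=A\cdot A^{tr}$ and reduce the claim to the basis-independence of this matrix. If anything, your justification of that last step --- characterizing $P$ as the orthogonal projection onto the row space of $H$, hence determined by the subspace alone --- is more direct than the paper's argument via the complementary decomposition $A\cdot A^{tr}+A_*\cdot A_*^{tr}=I_{d\times d}$.
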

\begin{proof}
Take another orthonormal collection  $\vec{v}_1, \cdots, \vec{v}_k$ that spans the same $k$ dimensional subspace of $\bR^d$, and 
define $\vec{V}_j$ in exactly the same manner as we defined $\vec{U}_j$. Let $\sqrt{k}T^1 = \vec{V}_1 +\cdots + \vec{V}_k$. We wish to show that for any $n\times d$ predictible matrix process $J$, we have $$J\star T^1 = J\star S^1.$$ Define the $d\times k$ matrices
\[
A= \begin{pmatrix} \vec{u}_1 & \cdots & \vec{u}_k\end{pmatrix}, B= \begin{pmatrix} \vec{v}_1 & \cdots & \vec{v}_k\end{pmatrix}
\] 
Then
\begin{eqnarray*}
S^1 &=& \int \sum_{j=1}^k \vec{u}_j \vec{u}_j\cdot dZ = \int [A\cdot A^{tr}]\cdot dZ \\
T^1 &=& \int \sum_{j=1}^k \vec{v}_j \vec{v}_j\cdot dZ = \int [B\cdot B^{tr}]\cdot dZ.
\end{eqnarray*}
Now extend the collections to bases of all of $\bR^d$: $\tilde{A} = [A\hspace{3mm} A_{*}]$ and $\tilde{B} = [B\hspace{3mm} B_{*}]$ where $A_* = (u_{k+1} \cdots u_d)$ and $B_* = (v_{k+1} \cdots v_d)$. We know that 
$$ \tilde{A}\cdot \tilde{A}^{tr} = A\cdot A^{tr} + A_*\cdot A_*^{tr} = I_{d\times d}.$$ But the choice of the orthonormal basis of $span(A)$ and that of $span(A_*)$ are independent, therefore we may conclude that $A\cdot A^{tr}$ is independent of the basis for its $k$-space. In particular, $A\cdot A^{tr}= B\cdot B^{tr}$. We conclude that $S^1 = T^1$ and $J\star S^1 = J\star T^1$.
\end{proof}

The following theorem says that the $k$ dimensional martingale run on a $\bR^d$ Brownian motion $Z$ runs on a $\bR^d_k$ Brownian motion $S^1$. In other words, $Z$ splits essentially as ``$S^1 + S^2$" with $X$ running parallel to $S^1$ and orthogonal to $S^2$.

\begin{theorem}
Following the notation of Theorem \ref{mainT2}, a $k$-dimensional martingale $X=\int H\cdot dZ$ has a representation with respect to an $\bR^{d}_k$ Brownian motion $S^1$: 
\begin{equation}
X = \int \sqrt{k}H \cdot dS^1.
\end{equation}
\end{theorem}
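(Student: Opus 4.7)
The plan is to unwind $S^1$ into a stochastic integral against $Z$ and recognize that the matrix by which $dZ$ is multiplied is exactly the orthogonal projection onto the row space of $H$; invariance of $H$ under that projection will give the claim.

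First, I would import the factorization from the proof of Theorem~\ref{mainT2}: writing $H = K\cdot V$, where $V$ is the $k\times d$ predictible matrix process whose rows $\vec{u}_1,\dots,\vec{u}_k$ are the Gram--Schmidt basis of the row space of $H$. Set $A := V^{tr}$, a $d\times k$ process with orthonormal columns, so $A^{tr}A = I_k$ and $P := A A^{tr} = \sum_{i=1}^{k}\vec{u}_i\vec{u}_i^{tr}$ is the orthogonal projection of $\bR^d$ onto the row space of $H$.

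Next I would rewrite $\sqrt{k}\,S^1$ as a single integral against $Z$. From the definition of $\vec{U}^i$ in (\ref{U}),
\[
\sqrt{k}\,S^1 \;=\; \sum_{i=1}^{k}\vec{U}^i \;=\; \int\!\Bigl(\sum_{i=1}^{k}\vec{u}_i\vec{u}_i^{tr}\Bigr)\cdot dZ \;=\; \int P\cdot dZ .
\]
Applying the associativity of the stochastic integral with matrix integrands (row-by-row) then gives
\[
\int \sqrt{k}\,H\cdot dS^1 \;=\; \int H\cdot d(\sqrt{k}\,S^1) \;=\; \int (H\cdot P)\cdot dZ .
\]

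The core step is then the algebraic identity $H\cdot P = H$, which holds pointwise because $H = K\cdot A^{tr}$ and
\[
H\cdot P \;=\; K A^{tr}\cdot A A^{tr} \;=\; K\,(A^{tr}A)\,A^{tr} \;=\; K\cdot A^{tr} \;=\; H .
\]
Substituting back yields $\int \sqrt{k}\,H\cdot dS^1 = \int H\cdot dZ = X$. The only point that requires any care is the measurability/integrability when pushing matrix factors through the stochastic integral, but this is routine since $A$ (hence $P$) is predictible and $H$'s integrability hypothesis transfers immediately to $HP = H$. The previous proposition on basis-invariance of $S^1$ already makes the identification $\sqrt{k}\,S^1 = \int P\cdot dZ$ canonical, so no additional work is needed to make sense of the expression.
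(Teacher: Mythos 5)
Your proposal is correct and is essentially the paper's own argument: the paper likewise observes that the rows of $H$ lie in the span of $\vec{u}_1,\dots,\vec{u}_k$, so $H\cdot dZ=\sum_i H\vec{u}_i\,\vec{u}_i\cdot dZ=H\cdot d(\sum_i\vec{U}^i)=\sqrt{k}\,H\cdot dS^1$, which is exactly your identity $HP=H$ applied to $\sqrt{k}\,S^1=\int P\cdot dZ$. Your version just makes the projection matrix $P=AA^{tr}$ and the factorization $H=KA^{tr}$ explicit, which is a slightly more formal rendering of the same step.
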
 
\begin{proof}
Let us think on the action of the matrix $H$. The rows of $H$ span a space for which $\vec{u}_1, \cdots, \vec{u}_k$ form an orthonormal basis. Therefore for any $v\in\bR^d$, 
$$H\cdot v = H\cdot \vec{u}_1 \vec{u}_1\cdot v + \cdots + H\cdot \vec{u}_k \vec{u}_k\cdot v.$$
Changing to the probabilistic setting, we have
\begin{eqnarray*}
 H\cdot dZ &=& \sum_{i=1}^k H\cdot \vec{u}_i \vec{u}_i\cdot dZ \\
&=& \sum_{i=1}^k H\cdot d\vec{U}^i \\
&=&  H \cdot d(\sum_{i=1}^k \vec{U}^i) \\
&=& \sqrt{k} H \cdot dS^1.
\end{eqnarray*}
\end{proof}

\begin{Problem}
If in Definition \ref{BM}, $W$ is measurable with respect to an $\bR^d$-Brownian motion $Z$, then there should be corresponding 
restrictions on the possibilities for $n$, $m$ and $K$. Find these restrictions.
\end{Problem}

\begin{Problem}
Find distribution and path properties for suitable classes of $\bR^{n\times m}_K$ processes; characterize in terms of infinitesimal generators.
\end{Problem}

\subsection{Regular and Exact $\bR^{n\times m}_K$ Brownian motions}
Consider the standard Brownian motion $Z= (Z_1, \cdots, Z_d)$. We can think of it as the cross-product process of $d$ one dimensional $\bR^1_1$ Brownian motions that are mutually orthogonal. Thus, it is an $\bR^{1\times d}_{{\bf{1}}_d}$ Brownian motion. On the other hand, $Z$ can also be thought of as the column sum
\[ Z= \begin{pmatrix} Z_1 \\ 0 \\ \vdots \\ 0\end{pmatrix}+ \begin{pmatrix} 0 \\ Z_2 \\ \vdots \\ 0\end{pmatrix}+ \cdots + \begin{pmatrix} 0 \\ 0 \\ \vdots \\ Z_d \end{pmatrix}\]
of $d$ one dimensional $\bR^d_1$ Brownian motions, that are mutually orthogonal. The coordinate processes in the sum form an ``orthonormal basis" for the space of $\bR^d$ martingales measurable with respect to $Z$. It is obtained by projecting $Z$ in the standard basis directions of $\bR^d$. Thus,
\[  \begin{pmatrix} Z_1 \\ 0 \\ \vdots \\ 0\end{pmatrix} =\int e_1^{tr}\cdot e_1\cdot dZ,\]
and so on. In the same way, $Z = \vec{U}^1 + \cdots + \vec{U}^d$, where $\vec{U}^i$ is the ``projection" of $Z$ on the direction $\vec{u}_i$. Thus, $Z$ may be thought as isomorphically associated with the $\bR^{d\times d}_{{\bf{1}}_d}$ Brownian motion $\vec{U}$. 

A rather remarkable property of $\vec{U}$ that distinguishes it among the general $\bR^{d\times d}_{{\bf{1}}_d}$ Brownian motions is that adding the coordinate processes adds the dimensions.  An $\bR^{2\times 2}_{{\bf{1}}_2}$ Brownian motion like ${V} = \left( \begin{pmatrix} Z_1 \\ 0 \end{pmatrix}, \begin{pmatrix} Z_2 \\ 0 \end{pmatrix}\right)$ clearly does not have this property. We first classify $\bR^{n\times m}_K$ Brownian motions that have this property.

\begin{definition}\label{regular_BM}
An $\bR^{n\times m}_K$ Brownian motion $\vec{W}$ is {\bf{regular}} if for any $1\leq i_1<\cdots< i_r\leq m$, we have
\begin{equation}\label{sum_dim}
Dim(\vec{W}^{i_1}+\cdots + \vec{W}^{i_r}) = k_{i_1}+\cdots + k_{i_r}.
\end{equation}
\end{definition}
The following proposition shows that $\vec{U}$ is a regular Brownian motion.

\begin{proposition}\label{special_property}
The process $\vec{U}$ has the following properties.
\begin{enumerate}
\item $Dim(\vec{U}^i) = 1$ for each $i$,
\item Let $d_0 = 0$, $d= k_1 + \cdots + k_m$ and $d_r = k_1+\cdots+k_r$. If $$\vec{W}^r = \vec{U}^{d_{r-1}+1} + \cdots + \vec{U}^{d_r},$$ then $Dim(\vec{W}^r) = k_r$, $\vec{W}^r$ and $\vec{W}^s$ are mutually orthogonal for $r\neq s$, and $\langle \vec{W}^r\rangle_t = k_r t$ for each $r$.
\end{enumerate}
\end{proposition}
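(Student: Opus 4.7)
The plan is to express each $\vec{U}^i$ and each $\vec{W}^r$ as a stochastic integral against $Z$ whose integrand is a pointwise orthogonal projection, and then read off rank, covariation, and quadratic variation from standard linear algebra applied to those projections. Concretely, rewriting the formula (\ref{U}) shows that $\vec{U}^i = \int M^i\cdot dZ$ with $M^i = \vec{u}_i \vec{u}_i^{tr}$, the rank-one projection onto $\mathrm{span}(\vec{u}_i)$. Similarly, summing yields $\vec{W}^r = \int P_r\cdot dZ$ with
$$P_r = \sum_{i=d_{r-1}+1}^{d_r} \vec{u}_i \vec{u}_i^{tr},$$
which, because the $\vec{u}_i$'s are orthonormal, is the orthogonal projection onto the $k_r$-dimensional subspace they span. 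In particular $P_r$ is symmetric, idempotent, and of rank $k_r$.

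For (1), $\mathrm{rank}(M^i) = 1$ pointwise in $(t,\omega)$, so by Definition \ref{Dim_def}, $Dim(\vec{U}^i)=1$. For the rank statement in (2), $\mathrm{rank}(P_r) = k_r$ pointwise, giving $Dim(\vec{W}^r) = k_r$ by the same definition.

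For the mutual orthogonality (in the sense of Definition \ref{orth_def}) of $\vec{W}^r$ and $\vec{W}^s$ when $r \neq s$, I would compute the covariations coordinatewise using the Itô isometry for stochastic integrals:
$$\langle \vec{W}^r_a, \vec{W}^s_b\rangle_t = \int_0^t \sum_{\ell=1}^d (P_r)_{a\ell}(P_s)_{b\ell}\,ds = \int_0^t (P_r P_s^{tr})_{ab}\,ds.$$
Since $P_r$ and $P_s$ are symmetric projections onto the orthogonal subspaces $\mathrm{span}(\vec{u}_{d_{r-1}+1},\dots,\vec{u}_{d_r})$ and $\mathrm{span}(\vec{u}_{d_{s-1}+1},\dots,\vec{u}_{d_s})$ respectively, the orthonormality relations $\vec{u}_i \cdot \vec{u}_j = \delta_{ij}$ give $P_r P_s = 0$, hence $\langle \vec{W}^r_a, \vec{W}^s_b\rangle \equiv 0$ for all $a,b$.

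For the quadratic variation of $\vec{W}^r$, I would sum the diagonal covariations:
$$\langle \vec{W}^r\rangle_t = \sum_{a=1}^d \langle \vec{W}^r_a\rangle_t = \int_0^t \mathrm{tr}(P_r P_r^{tr})\,ds = \int_0^t \mathrm{tr}(P_r)\,ds = k_r t,$$
using $P_r^{tr}=P_r$, $P_r^2=P_r$, and the fact that the trace of an orthogonal projection equals the dimension of its range. There is no genuine obstacle here; the only place one must be mildly careful is the interpretation of the integrand $\vec{u}_i \vec{u}_i$ in (\ref{U}) as the outer product $\vec{u}_i \vec{u}_i^{tr}$ so that the rank, trace, and orthogonality arguments can be applied pointwise in $(t,\omega)$ to predictible matrix-valued integrands.
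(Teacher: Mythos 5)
Your proposal is correct and follows essentially the same route as the paper's own proof: both identify the integrand of $\vec{U}^i$ as the rank-one outer product $\vec{u}_i^{tr}\vec{u}_i$ and that of $\vec{W}^r$ as the orthogonal projection $(V_r)^{tr}V_r$ onto the span of the relevant $\vec{u}_i$'s, then read off rank, orthogonality, and quadratic variation from this projection structure. Your explicit computation $P_rP_s=0$ and the trace argument for $\langle\vec{W}^r\rangle_t=k_rt$ are just slightly more detailed versions of the paper's appeal to orthogonal row spans and to $\langle\vec{U}^j\rangle_t=t$.
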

\begin{proof}
For the first assertion, simply observe that the $d\times d$ integrand matrix of $\vec{U}^i$ is simply $\vec{u}_i^{tr}\cdot \vec{u}_i$, which clearly has rank $1$ since every column is a multiple of $\vec{u}_i^{tr}$ (or since $\vec{u}_i$ has rank $1$).

For the second part, the $d\times d$ matrix corresponding with $\vec{W}^r$ is 
$$\vec{u}_{d_{r-1}+1}^{tr}\cdot \vec{u}_{d_{r-1}+1} +  \cdots + \vec{u}_{d_r}^{tr}\cdot \vec{u}_{d_r}.$$
If $V_r$ is the matrix having rows $\vec{u}_{d_{r-1}+1}$ through $\vec{u}_{d_r}$, then the matrix for $\vec{W}^r$ is $(V_r)^{tr}V_r$. Since $rank(V_r)=k_r$ a.s., for a.e. $t$, and since $rank((V_r)^{tr}V_r) = rank(V_r)$, we have that the dimension of $\vec{W}^r$ is $k_r$. The span of the rows of $(V_r)^{tr}V_r$ is the span of the rows of $V_r$, hence it is clear that $\vec{W}^r$ and $\vec{W}^s$ are orthogonal for $r\neq s$. Finally, $$\langle \vec{W}^r\rangle_t =  \langle \vec{U}^{d_{r-1}+1}\rangle_t + \cdots + \langle \vec{U}^{d_r}\rangle_t = k_r t.$$ This is because of the orthogonality of the $\vec{U}^j$'s and since $\langle\vec{U}^j\rangle = t$, for all $j$.
\end{proof}

Most of our results in this section have been about $\vec{U}$. Since such a process may have more immediate applications, we formally define it separately. Let $$n_0=0, n_j = k_1 + \cdots+k_j, K= (k_1, \cdots, k_m).$$

\begin{definition}\label{exactBM}
Let $T=(T^1, \cdots, T^m)$ be an $\bR^{n\times m}_K$ Brownian motion measurable with respect to some $\bR^n$ Brownian motion $B$. Suppose there is a predictible orthonormal matrix process $P = (P_1, \cdots, P_n)$ such that for each $r$,
\begin{equation}\label{require}
T^r = \frac{1}{\sqrt{k_r}}\sum_{i=n_{r-1}}^{n_r}\int P_i\cdot P_i^{t} \cdot dB.
\end{equation}
Then we call $T$ an {\bf{exact}} $\bR^{n\times m}_K$ Brownian motion.
\end{definition}
Geometrically, what we have done is start with a Brownian motion $B$ and a predictible \emph{frame field} process $(P_1, \cdots, P_n)$, then project $B$ onto the subspaces spanned by collections $P_{n_{r-1}}, \cdots, P_{n_r}$, and finally normalize the projected processes. This obtains $T^r$. So if the coordinate processes of $T$ correspond to normalized projections of a Brownian motion onto orthogonal spaces, then $T$ is exact. As we saw with $\vec{U}$, the sum of coordinate processes of an exact Brownian motion has dimension equalling the sum of dimensions.

The definition however is merely following the construction of $\vec{U}$. We pose below the problem of understanding regular and exact Brownian motions more generally.

\begin{Problem}
Let $W=(\vec{W}_1, \cdots, \vec{W}_m)$ be a $\bR^{n\times m}_K$ Brownian motion with $\vec{W}_j = \int H^j\cdot d\vec{Z}^j$ being a stochastic representation with respect to some $\bR^{d_j}$ Brownian motion $\vec{Z}^j$. Find the minimal conditions that the matrix processes $H^j$ and the Brownian motions $\vec{Z}^j$ must satisfy in order that $W$ be either a regular or an exact $\bR^{n\times m}_K$ Brownian motion.
\end{Problem}

\subsection{Standard orthogonaliy}
In an earlier version of this paper, the author defined $\bR^{n\times m}_K$ Brownian motion $\vec{W}$ using a different (weaker) notion of orthogonality for $\bR^n$ martingales, that $\vec{W}^i$ and $\vec{W}^j$ are orthogonal if $\vec{W}^i\cdot\vec{W}^j$ is a martingale. This was based on the author's work in \cite{Ja1}, where we used such a concept (called ``standard orthogonality") for $\bC^n$ martingales. There are two reasons why it is tempting to assume importance for standard (or \emph{IP}, for inner-product) orthogonality. First, since we are dealing with $\bR^n$ processes, it is natural to consider their interaction on the basis of the outer dot product. This was our idea in \cite{Ja1}. 

The second reason is that if we take a martingale $\int K\cdot dW$, we see that the total behavior of the process is based on the dot products $K^j\cdot dW$. We would want therefore that $K^j$ and $dW$ be `orthogonal' in an appropriate dot-product sense, rather than requiring the more stringent condition that all coordinates of $dW$ be independent of $K$. This is related to the issue discussed in section \ref{Results} and to Question  \ref{indep_orth}. Based on this role of the dot product in stochastic representations, one can ask whether defining orthogonality for martingales in terms of the dot product may prove useful.

For this paper however, we decided that the well-known Definition \ref{orth_def} of orthogonality has clear applications with the process $\vec{U}$, hence is likely more important. The work in \cite{Ja1} should also be re-analyzed in terms of Definition \ref{orth_def}.

\subsection{Cross product and Dimension}
In the spirit of definitions for general Brownian motions, let us consider a cross-product process $Y = (Y_1, \cdots, Y_m)= $
$$ (\int {H}^1\cdot dZ, \cdots, \int {H}^m\cdot dZ)$$ where each ${H}^j$ is $n\times d$ and $Dim(Y_i) = k_i$. What should be the dimension of $Y$? Suppose we consider $Y$ as a joint $n\cdot m$ coordinate process, then $Y$ will have a total dimension if the matrix
$$ H = \begin{pmatrix} {H}^1\\ \vdots \\ {H}^m\end{pmatrix}$$
has a constant rank a.s., for a.e. $t$. This need not be the case in general. However, \emph{if the $Y_j$'s are mutually orthogonal}, then we can easily verify that the total dimension
$$ Dim(Y) = k_1 + \cdots + k_m.$$
Considered in this way, we see that the dimension of $\bR^{n\times m}_K$ Brownian motion is $k_1+\cdots+ k_m$.

The other way of looking at this is to consider the cross product as a concatenation of distinct processes. In this case, we should regard that $Y$ has a multi-index dimension 
\begin{equation}\label{cross_dim2}
Dim(Y) = K = (k_1, \cdots, k_m).
\end{equation}
Our definition of $\bR^{n\times m}_K$ Brownian motion has this simpler perspective; whether we should consider the total dimension can be decided in connection with particular applications.

\section{Martingale transforms}
\subsubsection{Left multiplication}
Given a predictible $m\times n$ matrix process $A$, it can act by left multiplication on a martingale $X= \int H\cdot dZ$ with $n\times d$ matrix $H$:
\begin{equation}\label{left_transform}
Y= A\star X = \int A\cdot H\cdot dZ.
\end{equation}
$A\star X$ is called a martingale transform of $X$ by matrix $A$. The mapping $$A: X\rightarrow Y$$ is similar to a mapping between two manifolds. If we write $X = X^1+X^2$ as the sum of two martingale sub-processes, then clearly each subprocess gets mapped to a subprocess of $Y$. There should be much that can be said regarding the interconnection between $A$, $H$ and Dimension. For instance, we can ask how such a transform maps between the ``manifolds" of processes having fixed dimensions.
\subsubsection{Right multiplication}
Applications suggest that we will also be interested in right multiplication transforms as well. If $B$ is a $d\times d$ matrix process, then we can define 
\begin{equation}\label{right_transform}
Q = X\star B = \int H\cdot B\cdot dZ.
\end{equation}
As an example, let $H = (h_1, h_2)\in \bC^2$ and let $B = \begin{pmatrix} 1 & i \\ i & -1\end{pmatrix}$. Then $$H\cdot B = (h_1+ih_2, i(h_1+ih_2)).$$ This right transformation by matrix $B$ is well known as giving the martingale associated with the Beurling-Ahlfors transform; see \cite{Ja1}. (It is however expressed in literature as left multiplication due to a difference in notation, where $H$ is treated as a column vector.)  Let us restrict attention to a space of non-stagnant martingales, i.e. having $H$ non-zero almost surely, for all $t$. Then all non-zero martingales have complex dimension $1$. Observe that 
$$ X = X^1+X^2 = \int \frac{(h^1 + i h^2)}{2}d(Z_1-iZ_2) + \int \frac{(h^1 - i h^2)}{2}d(Z_1+iZ_2),$$
a sum of two complex-orthogonal martingales. The left-kernel of $B$ is the vector subspace of \emph{conformal} martingales of the form $\int \alpha d(Z_1+iZ_2)$, hence we have $X^2\star B \equiv 0$. According to our definition for the dimension of martingale spaces in \cite{Ja1}, both the kernel and cokernel of $B$ have martingale space-dimension $=1$.

\begin{remark}
The question may be asked as to how the definition for dimension of martingales given in this paper will expand our understanding of martingale spaces. Perhaps some of the definitions and ideas will acquire clarity or require refinements.
\end{remark}

\section{Some thoughts and directions}

\subsection{``Manifolds" of martingales}
Let $\mathcal{F}_{n,k}(Z)$ be the collection of all $k$ dimensional $\bR^n$-valued processes that are measurable with respect to $Z$: $\mathcal{F}_{n,k}(Z)=$
\begin{equation}\label{Space-d}
  \{ \int A \cdot dZ: A \textrm{ is } n\times d, \textrm{ predictible}, rank(A) = k \textrm{ a.s., a.e. t} \}
\end{equation}
$\mathcal{F}_{n,k}(Z)$ can intuitively be considered as a sub-manifold within the vector space of all $Z$-measurable $\bR^n$-martingales.
Given any $n \times d$ predictible matrix process $H$ of rank $k$, we can think of $H$ as a function from $\mathcal{F}_{d, d}(Z)$ to  $\mathcal{F}_{n,k}(Z)$. By the theory established in the paper, we know that any $W\in \mathcal{F}_{d,d}$ splits into an orthogonal sum $W_H + W_H^\perp$, where $W_H\in \mathcal{F}_{d, k}$, $W_H^\perp\in \mathcal{F}_{d, d-k}$ and $$H\star W = H\star W_H, \hspace{4mm} H\star W_H^\perp \equiv 0.$$
One can ask about the nature of these ``sub-manifolds" and the behavior of $H$ between them.

\subsection{Martingale-valued mappings}
If we compose a smooth mapping $f:\bR^d\rightarrow\bR^n$ with $Z$, we obtain a semi-martingale 
\begin{eqnarray*}
 f(z_0 +Z_t)&=& f(z_0)+\int Df(z_0+ Z_s)\cdot dZ_s + Q_t(f,z_0) \\ &=& X_t(f, z_0)+Q_t(f, z_0) 
\end{eqnarray*}
where $Q$ is the bounded variation process. It is clear that if $f$ is a special map like a diffeomorphism, submersion, immersion, etc, then the martingale part $X_t(f, z_0)$ will have a certain fixed dimension. We can regard the semi-martingale as having the same dimension as its martingale part.

What makes this interesting is that we have a function $X_t(f,\cdot): \bR^d\rightarrow M\subset \mathcal{F}_{n,k}$, and in an appropriate sense, the function is continuous. Here as well, one can think of $M$ as a ``manifold" of martingales. Letting $H_s(z_0) = Df(z_0+Z_s)$, we find that for each $z_0$,
there is a decomposition $$ z_0 + Z_t = z_0 + Z(z_0,H_t) + Z(z_0, H_t)^\perp$$ such that
$$ H(z_0)\star Z = H(z_0)\star Z(z_0, H_t), \hspace{4mm} H(z_0)\star Z(z_0, H_t)^\perp \equiv 0.$$
$H$ annihilates not just $Z^\perp$ but an entire linear subspace of $\mathcal{F}_{d,d-k}$. Thus, if $V\subset \mathcal{F}_{n,k}$ is in the range of $H$, then 
$$H^{-1}(V) = \{(z, W): H(z)\star W \in V\}$$
is an interesting bundle of linear coset spaces.

Further, if $g:\bR^n\rightarrow\bR^m$, then $Dg$ induces the matrix process and transform on the martingale map $X(f)$ by
$$ Dg\star X_t(f) = \int_0^t [Dg(f)\cdot Df](z+Z_s)\cdot dZ_s.$$
For specially chosen $g$, we should be able to investigate the action of $Dg$ on $X(f)$.

\subsection{Relative to Stopping times} If we want to think that a martingale is analogous to a manifold, then naturally we want to understand associated martingales that may be compared to submanifolds of a manifold. (Clearly we cannot call them sub-martingales!) This paper already shows how if we use martingale transforms with appropriate matrix processes, then the base Brownian motion splits into orthogonal sum of sub-processes, which may be regarded as examples of martingale sub-processes. We can do this even when the base process is already a martingale, and not necessarily Brownian motion.

Another direction is to employ stopping times. A martingale $X$ with filtration $\mathcal{F}_t$ stopped at a stopping time $\tau$ is intuitively analogous to stopping on a sub-manifold of a manifold. Then the process $Y_t = X_{\tau+t}-X_\tau$ is a martingale with respect to the filtration $\mathcal{F}_{\tau+t}$. Looking at $Y_t$ is like looking at $X$ relative to the stopped random variable $X_\tau$; this is analogous to looking at the manifold from a submanifold. In fact, when we define dimension of a manifold, we go to each point on it and verify that locally the manifold is equivalent to Euclidean space. Likewise, we can ask whether for all stopping times $\tau$, the martingale $X_{\tau+t}-X_\tau$ with filtration $\mathcal{F}_{\tau+t}$ also has dimension $k$. Moreover, we can take such a ``local" understanding as the starting point and see if anything more subtle can be observed either with regard to the stopping times, or with the general definition itself and the consequences.

\subsection{Homotopy}
Observe that $\varphi: [0,1]\rightarrow \mathcal{F}_{2,1}\cup \mathcal{F}_{2,2}$ defined $$\varphi(t) = (\sqrt{t}Z_1, \sqrt{1-t}Z_2)$$ is a homotopy between two $1$ dimensional $\bR^2_1$ Brownian motions $(Z_1, 0)$ and $(0, Z_2)$. The homotopy's values in $(0,1)$ are however always $\bR^2_2$ Brownian motions, of dimension $2$.
More generally, one can regard a martingale of dimension $k$ as a suitable continuous limit of martingales of higher dimension; hence, the lower dimensional martingales can be seen as being on the boundary of the set of higher dimensional ones. 

\subsection{Relative to Frame fields} Let $(\vec{v}_1, \cdots, \vec{v}_d)$ be a Euclidean frame field in $\bR^d$, i.e. an orthonormal basis at each point. Split into two parts $(\vec{v}_1, \cdots, \vec{v}_k)$ and $(\vec{v}_{k+1}, \cdots, \vec{v}_d)$, spanning $V^1$ and $V^2$. Then any $X= \int J\cdot dZ$ splits as $X^1+X^2= \int J^1\cdot dZ+ \int J^2\cdot dZ$. The rows of $J$ are projected into $V^i$ to obtain the rows of $J^i$. 
\begin{Problem}
Choose process $X$ and frame fields (or space-fields $V^i$), and compare the associated processes $X$, $X^1$ and $X^2$.
\end{Problem}

\subsection{Dimension of Martingale vs Filtration}\label{AF_dim}
 The author found in \cite{Hi} and \cite{Hi2} definitions for dimension of a filtration $\mathcal{F}$ and an $AF$-dimension for martingales. Indeed, this appears a well developed and intricate subject; we just wish to point out that our approach and definitions seem to be different. $Dim(\mathcal{F})$ is the minimal number of $\mathcal{F}$-martingales $\{M_1, \cdots\}$ needed so that any $\mathcal{F}$-martingale $Y$ has a stochastic representation $\int A\cdot dM$. In particular, the Brownian filtration of $\bR^d$ Brownian motion has dimension $d$. Observe then that the filtration $\mathcal{F}^X$ generated by a martingale $X$ will automatically have a value, the dimension of $\mathcal{F}^X$, corresponding to $X$. We could have identified the same value as the Dimension of $X$. The $AF$-dimension of $X$ seems quite similar to this, although M. Hino \cite{Hi2} mentions that the equivalence of these definitions is not yet known.

Alternately, we can observe that there is a $minimal$ $d$ for which there exists an $\bR^d$ Brownian motion $Z$ and a possibly extended filtration $\mathcal{F}^Z \supset \mathcal{F}^X$ such that $X$ is measurable with respect to $(Z, \mathcal{F}^Z)$ and has a stochastic representation $\int H\cdot dZ$. We could have identified $Dim(X)$ with $Dim(Z) = d$. Still another alternative, we can start with this same representation $X= \int H\cdot dZ$. Then there is a minimal $k$ (possibly smaller than $d$) and an $\bR^k$ Brownian motion $(W, \mathcal{\tilde{F}}^X)$ such that $\mathcal{F}^X\subset \mathcal{\tilde{F}}^X \subset \mathcal{F}^Z$, and we have the representation $X=\int K\cdot dW$. We could consider this value of $k$ as a notion of dimension of $X$. 

This paper's definition however makes a stronger requirement. $X=\int K\cdot dW$ must also have $rank(K)=k$ a.s., a.e. $t$ in order to have Dimension $k$. (This requirement implies $\mathcal{\tilde{F}}^X = \mathcal{F}^X$.) Our reason for this added requirement is the geometric intuition described earlier, that seems similar to topology and geometry. Studying special classes of martingales based on this notion of dimension may give similar theories for stochastic processes. 
(Note also that we have the notion of Brownian Dimension from Definition \ref{Brownian_D}, which is based on an even stronger requirement.)

\begin{remark}
Observe here as well, there can be useful variants.  For instance, we can start with $(X, \mathcal{F}^X)$. Suppose $Dim(\mathcal{F}^X) = k$, so that there are $\mathcal{F}^X$-martingales $M = (M^1, \cdots, M^k)$ such that $X = \int N\cdot dM$. If $rank(N)=r$ a.s., a.e. $t$, then we can say that the dimension of $X$ is $r$. It appears then, we will necessarily have $r=k$, with the $M^j$ being suitable coordinates of $X$. 
\end{remark}

\begin{Problem}
Classify martingales based on these different possible definitions for Dimension. Find the precise relations.
\end{Problem}

\begin{remark} The ideas motivating this paper are primarily from manifold theory and the work of section 3 in \cite{Ja1}. The author saw \cite{Hi} and \cite{Hi2} only after or in the process of writing the first drafts of this paper. However, it did motivate deeper thought on filtration and a realization of the limitation in the implication of Theorem \ref{mainT2}. Thankfully, a broader perspective was found in \cite{KS} and Theorem \ref{KS_theorem}. There is also another analogous notion of intrinsic dimension for signal processing that is similar; see wikipedia for a basic idea. 
\end{remark}

\subsection{An error correction for \cite{Ja1}}
The following misstatement in a definition occurs in \cite{Ja1}. We correct it here while noting that the correction in no way whatsoever changes the main results of the paper. In the \cite{Ja1}, we had introduced three martingale spaces (see for instance in section 5):
\begin{eqnarray*}
\mathcal{M}^0 &=& \left\{\int_0^t \nabla U_\varphi(B_s)\cdot dZ_s: U_\varphi \textrm{ heat-ext of } \varphi\in L^2(\bC)\right\} \\
\mathcal{M}^1 &=& I\star\mathcal{M}^0 + J\star\mathcal{M}^0, \textrm{   where } I = \begin{pmatrix}1 & \\ & 1\end{pmatrix}, J = \begin{pmatrix} & -1 \\ 1 & \end{pmatrix} \\
\mathcal{M} &=& \left\{A\star\varphi = \int_0^t A\nabla U_\varphi(B_s)\cdot dZ_s: \varphi\in L^2, A \textrm{ any } 2\times 2 \textrm{ matrix}\right\} 
\end{eqnarray*}
The last space $\mathcal{M}$ is referred to at many places as the space of martingale transforms of $\mathcal{M}^0$, and it is also claimed that $\mathcal{M}^1\subset \mathcal{M}$.  However it is clear that $\mathcal{M}$ is not a vector space as stated since it need not include martingales like $A\star\varphi + B\star\psi$, i.e. not closed under addition. The following is the necessary definition that makes the paper precise. Define 
\begin{equation}\label{correction}
\mathcal{M} = I_{11}\star\mathcal{M}^0 + I_{12}\star\mathcal{M}^0 +I_{12}\star\mathcal{M}^0+I_{12}\star\mathcal{M}^0,
\end{equation}
where $I_{ij}$ has $1$ in the $(i,j)$ slot of the matrix and $0$ otherwise. Thus $\mathcal{M}$ is the vector space of martingales generated by all martingale transforms of $\mathcal{M}^0$.

\subsection{Acknowledgment}
The author thanks Professor Rodrigo Ba\~nuelos for reading the paper and for his guidance.

\markboth{}{\sc \hfill \underline{References}\qquad}

\end{document}